\documentclass[11pt]{amsart}
\usepackage{ae}
\usepackage[applemac]{inputenc}
\usepackage{comment}
\usepackage{amsmath,amsthm,amsfonts, mathtools} 
\usepackage{enumerate,bbm}

\theoremstyle{plain}

\newtheorem{Theorem}{Theorem}[section]
\newtheorem{theorem}[Theorem]{Theorem}

\newtheorem{Lemma}[Theorem]{Lemma}
\newtheorem{lemma}[Theorem]{Lemma}

\newtheorem{proposition}[Theorem]{Proposition}
\theoremstyle{definition}
\theoremstyle{definitionsAsText}

\theoremstyle{definition}%

\theoremstyle{remark}

\newtheorem*{remarks}{Remarks}

\newcommand{\textita}[1]{\textit{#1\/}}

\newcommand{\prova}{\begin{proof}}	
\newcommand{\fimprova}{\end{proof}}

\newcommand{\lista}{\renewcommand{\theenumi}{\alph{enumi}}\begin{listalf}}
\newcommand{\fimlista}{\end{listalf}}

\newcommand{\listr}{\makeatletter\renewcommand{\theenumi}{\roman{enumi}}
\renewcommand{\labelenumi}{(\theenumi)}\makeatother
\begin{enumerate}}
\newcommand{\fimlistr}{\end{enumerate}}

\newenvironment{listrom}{\makeatletter\renewcommand{\theenumi}
{\roman{enumi}
}
\renewcommand{\labelenumi}{\textnormal{(\theenumi\!\!)}}\makeatother
\begin{enumerate}}{\end{enumerate}
}

\newenvironment{listalf}{\makeatletter\renewcommand{\theenumi}
{\alph{enumi}
}
\renewcommand{\labelenumi}{\textnormal{(\theenumi\!\!)}}\makeatother
\begin{enumerate}}{\end{enumerate}
}

\newcounter{listromSinC}

\newcounter{listromSSinC}

\newcounter{enumerateSinC}

%2-5-06

\newcommand{\intern}[1]{\langle #1\rangle}
 \newcommand{\fecho}[1]{\overline{#1}}

\newcommand{\sspan}{\operatorname{span}}

%\newcommand{\de}{\operatorname{d}}

%%% \Ref 'e suposto n~ao estar entre parenteses, para            % citar labels "literais"

 \newcommand{\AAA}{{\mathcal{ T}(\mathcal{N})}}
\newcommand{\BBB}{{ \mathcal{B}}}

\newcommand{\NNN}{{ \mathcal{N}}}
\newcommand{\XXX}{{ \mathfrak{X}}}

 \newcommand{\HHH}{{ \mathfrak{H}}}

\newcommand{\LLL}{{ \mathfrak{L}}}

\newcommand{\MMM}{{ \mathcal{M}}}

\newcommand{\SSS}{{ \mathfrak{S}}}

\renewcommand{\mathbb}[1]{\mathbbm{#1}}
%\newcommand{\includecomm}{\includecomment{comment}}% with comments
% no comments

\begin{document}
\title[Finite rank operators in Lie ideals]{Finite rank operators in Lie ideals of 
 nest algebras}

\author[L. Oliveira ]{Lina Oliveira}
\address{Department of Mathematics\\Instituto Superior 
T\'ecnico\\
Av. Rovisco Pais\\
1049-001 Lisbon\\
Portugal}
\email{linaoliv@math.ist.utl.pt}
\subjclass{Primary 47L35; Secondary 17B60.}
\keywords{Finite rank operator, Lie ideal, nest algebra, rank one operator.}

\begin{abstract}The main theorem  provides a characterisation of the finite rank operators lying in a norm closed Lie ideal of a continuous nest algebra.
These operators 
 are charaterised as those  finite rank operators in the nest  algebra satisfying a condition determined by a left order continuous homomorphism on the nest. 
 A crucial fact used in the proof of  this theorem is the decomposability of the finite rank operators.
  One shows  that a finite rank operator in a norm closed Lie ideal of a continuous nest algebra can be written as a finite sum of rank one operators lying in the ideal. 
\end{abstract}

\maketitle
\section{Introduction}\label{intro}
The structure of Lie ideals of nest  algebras 
 has been investigated by many authors  for at least a decade (cf. \cite{DDH, HP,  HMS, MS, LuYu} and the literature referenced therein). Two main lines of this research might be essentially described as  focusing either  on the connection between Lie ideals and associative ideals   
or  on 
 similarity invariant subspaces.

The present work approaches the investigation of Lie ideals from a different perspective, its  departing point being the assumption that the set of finite rank operators in the ideal should be worth investigating. This seems only natural  
if  one bears in mind the important rôle played by the finite rank operators in the theory of nest algebras given their density properties  (cf. \cite{KD, Er}). Furthermore, investigating the decomposability  of the finite rank operators in   Lie ideals  seems to be an interesting problem in its own right. One has simply to refer to the literature to realise that an  effort has been made to investigate the decomposability of finite rank operators lying in various subalgebras  of $\BBB(\HHH)$ (see, for example,  \cite{B, DL, D,  EP} and also \cite{Er}, citing a theorem of Ringrose).

The main result of this work, Theorem \ref{homomorphism finite rank}, appears in Section \ref{character}. This theorem provides a characterisation of the 
finite rank operators lying in a norm closed Lie ideal $\LLL$ of a continuous nest algebra $\AAA$. This is a characterisation along the lines of that obtained by Erdos and Power  for weakly closed associative ideals (cf. \cite{EP}, Theorem 1.5) and characterises the finite rank operators lying in $\LLL$ as those in the nest  algebra satisfying a condition determined by a left order continuous homomorphism on the nest $\NNN$.

A crucial fact used in the proof of Theorem \ref{homomorphism finite rank} is that a finite rank operator in a norm closed Lie ideal of a continuous nest algebra can be written as a finite sum of rank one operators lying in the ideal. The decomposability of the finite rank operators is stated in Theorem \ref{finite rank}, which is the principal result of Section \ref{finite rank op}. The remaining contents of this section consists in the statement and proof of  auxiliary results required  to prove  Theorem \ref{finite rank}.

Of the four sections of which this work consists, only Section 2 remains to be described. This is a preliminary section establishing notation and recalling well-known facts about nest algebras needed in the sequel.

\section{Preliminaries}\label{prelim}

Let $\HHH$ be a complex Hilbert space and let $\BBB(\HHH)$ be the complex Banach algebra of bounded linear operators on $\HHH$. 
A totally ordered family $\NNN  $ of projections in $\BBB(\HHH)$ containing $0 $ and the identity $I $
is said to be a \textit{nest}. If, furthermore, $\NNN  $ is a complete sublattice
of the lattice of projections in $\BBB(\HHH)$,  then $\NNN $ is called a \emph{complete 
nest}. 

  Let $P$ be a projection in the nest $\NNN$ and  define $P_{-}$ to be the projection in $\NNN$ satisfying $$P_{-} =\bigvee \{Q\in \NNN: Q<P\},$$ if $P$ is not zero, and $P_-$ is zero, otherwise.  
  A complete nest $\NNN$ is said to be \emph{continuous} if, for all 
projections $P$ 
in $\NNN$, the projections $P$ and $P_{-}$ coincide.

The \emph{nest 
algebra} $\AAA$ associated 
with a nest $\NNN  $ is the subalgebra of all operators $T $ in $\BBB  (\HHH )     $ such that, for all projections $P $ 
in $\NNN  $, 
 \[T(P  (\HHH ))
\subseteq P  (\HHH ), 
\]
or, equivalently,
an operator $T $ in $\BBB  (\HHH ) $ lies in $\AAA  $ if and only 
if, for all projections $P $ in the nest $\NNN  $,
 \[   
P^\perp  T P = 0.
\] 
It is well-known that $\AAA  $ is a unital 
weak operator closed subalgebra of
$\BBB  (\HHH ) $  and  
  that each nest is 
contained in a complete nest which generates the same nest algebra 
(cf. \cite{KD, Ring}).  Since this work
is mainly concerned   with the nest algebras and not with the nests 
themselves, henceforth only complete nests will be considered. A nest algebra associated to a continuous nest is said to be a \emph{continuous} nest algebra.

A nest algebra $\AAA$ together with the product, defined for all operators $T$ and $S$ in $\AAA$, by
$$
[T,S]=TS-ST
$$
is a Lie algebra and    a complex subspace $\LLL$ of the nest algebra $\AAA$
is said to be a \emph{Lie ideal}  if $[\LLL, \AAA]\subseteq \LLL$.

Let  $x$ and $y$ be elements of the Hilbert space  $\HHH$ and  let $x\otimes y$ be the rank one operator   defined, for all $z$ in $\HHH$, by 
 $$
  z\mapsto \intern{z, x}y,
  $$
   where $ \intern{\cdot, \cdot}$ denotes the inner product of $\HHH$. A rank one operator $x\otimes y$ lies in $\AAA$ if and only if  
   $$
   P_- x=0\quad \quad \quad \text{and} \quad \quad \quad Py=y,
   $$
   for some projection $P$ in the nest, and $P$ can be chosen to be equal to  $\bigwedge\{Q\in \NNN: Qy=y\}$ (cf. \cite{Ring}). As a consequence, since the nest $\NNN$ is continuous,  for any $x\otimes y$  in $\AAA$, the elements $x$ and $y$ are necessarily orthogonal. 
  (For  the general theory  of nest algebras, the reader is referred to \cite{KD, Ring}.)

%%%%%%%%%%%%%%%%%%%%%%%%%%%%%%%%%%%%%%%%%%%%%%%%%%%%%%%%  SECTION FINITE RANK %%%%%%%%%%%%%%%%%%%%%%%%%%%%%%%%%%%%%%%%%%%%%%%%%%%%%%%%%%%%%

\section{Finite rank operators}\label{finite rank op}
 
 The principal result of this section, Theorem \ref{finite rank},  
establishes the possibility of 
constructing the finite rank operators in  $\LLL$  as finite sums of rank one operators also lying in the ideal. 
%%%%%%%%%%%%%%%%oliveira_arxiv2%%%%%%%%%%%%%%%%%%%%
A central idea underlies the proof of this theorem, and implicitly also the proofs  of some of the auxiliary results. It might be described as regarding each rank one operator $x\otimes y$ in any nest algebra $\AAA$ from the point of view of a particular pair of projections  
 intrinsically associated to $x$ and $y$. The way in which this projections  are built is described as follows.

Let $z$ be an element of the Hilbert space $\HHH$ and, as in \cite{LO},  let  the  projections $P_z$ and $\hat{P}_z$ be defined by
 
 $$
 P_z=\bigwedge\{Q\in \NNN: Qz=z\}\quad  \text{,} \quad \quad \hat{P}_z=\bigvee \{Q\in \NNN: Qz=0\}.
 $$
 The projections $P_z$ and $\hat{P}_z$ lie in the nest $\NNN$ and are such that 
 $$
 P_zz=z \, , \quad \quad  \quad  \quad \quad  \hat{P}_z z=0.
 $$
The pair of projections associated to the rank one operator $x\otimes y$  are $\hat{P}_x$ and $P_y$, respectively. The proof of the next lemma  is a first example of how 
these projections  are used. In the  lemma,  for a given norm closed Lie ideal $\LLL$ of a continuous nest algebra,
  some rank one operators  are singled out as lying necessarily in  $\LLL$, provided that a particular operator $x\otimes y$ lies in $\LLL$. 

%%%%%%%%%%%%%%%%%%%%%%%%%%%%%%%%%%%%%%%%%%%
 \begin{Lemma}\label{1}Let $\AAA$ be a continuous nest algebra, let $\LLL$ be a norm closed Lie ideal of $\AAA$ and let  $x\otimes y$ be a rank one operator lying in  $\LLL$. The following assertions hold.
 \begin{listrom}
 \item  If $x\otimes z$ is a rank one operator in the nest algebra $\AAA$ such that  
 $ P_z\leq P_y$, then  $x\otimes z$ lie in $\LLL$.
 \item  If  $w\otimes y$ is a rank one operator in the nest algebra $\AAA$ such that  
 $ \hat{P}_x\leq \hat{P}_w$, then $w\otimes y$ lies in $\LLL$.
 \end{listrom}
   \end{Lemma}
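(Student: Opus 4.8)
The plan is to realise each of the two operators as a scalar multiple of a single Lie bracket $[a\otimes b,\,x\otimes y]$ with $a\otimes b$ chosen appropriately in $\AAA$, falling back on a norm limit of such brackets in the one degenerate situation where no suitable $a\otimes b$ is available. The computational backbone is the elementary identity
\[
[a\otimes b,\,x\otimes y]=\langle y,a\rangle\,(x\otimes b)-\langle b,x\rangle\,(a\otimes y),
\]
valid for arbitrary vectors, combined with the fact recorded in Section~\ref{prelim} that, the nest being continuous, the two vectors of any rank one operator of $\AAA$ are orthogonal. Thus in part~(i) we have $\langle z,x\rangle=0$, and in part~(ii) we have $\langle y,w\rangle=0$, and in each case one of the two terms above will vanish by design. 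Since $\LLL$ is a complex subspace with $[\AAA,\LLL]\subseteq\LLL$, any such bracket lies in $\LLL$.

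For part~(i) I would first treat the strict case $P_z<P_y$. Setting $b=z$ annihilates the second term by orthogonality, leaving $[a\otimes z,\,x\otimes y]=\langle y,a\rangle\,(x\otimes z)$, so it suffices to find $a$ with $a\otimes z\in\AAA$ and $\langle y,a\rangle\neq0$. The natural choice is $a=P_z^{\perp}y$: then $P_za=0$ forces $a\otimes z\in\AAA$, while $\langle y,a\rangle=\norm{P_z^{\perp}y}^2$. Because $P_z<P_y$ and $P_y$ is the least projection of $\NNN$ fixing $y$, one has $P_zy\neq y$, hence $P_z^{\perp}y\neq0$ and the scalar is nonzero; dividing gives $x\otimes z\in\LLL$.

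The hard part is the boundary case $P_z=P_y$, in which every $a$ with $a\otimes z\in\AAA$ satisfies $a\perp y$ and the construction collapses; this is exactly where continuity must enter. Here I would approximate. Continuity gives $(P_y)_-=P_y=\bigvee\{Q\in\NNN:Q<P_y\}$, so there are projections $Q_n\in\NNN$ with $Q_n<P_y$ and $Q_nz\to P_yz=z$. Putting $z_n=Q_nz$, one has $P_{z_n}\leq Q_n<P_y$, and also $P_{z_n}x=0$ (since $Q_n\leq P_z$ and $P_zx=0$), so that $x\otimes z_n\in\AAA$; the strict case already proved yields $x\otimes z_n\in\LLL$. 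As $\norm{x\otimes z_n-x\otimes z}=\norm{x}\,\norm{z_n-z}\to0$ and $\LLL$ is norm closed, $x\otimes z\in\LLL$.

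Part~(ii) is entirely dual, now acting on the first vector. I would set $a=w$, so that $\langle y,w\rangle=0$ kills the first term and $[w\otimes b,\,x\otimes y]=-\langle b,x\rangle\,(w\otimes y)$. Choosing $b=\hat{P}_wx\in\hat{P}_w\HHH$ ensures $w\otimes b\in\AAA$, with $\langle b,x\rangle=\norm{\hat{P}_wx}^2$; this is nonzero precisely when $\hat{P}_x<\hat{P}_w$, since otherwise $\hat{P}_w$ would annihilate $x$ and hence satisfy $\hat{P}_w\leq\hat{P}_x$. The degenerate case $\hat{P}_x=\hat{P}_w$ is again resolved by continuity and norm closedness: continuity rules out gaps (a jump $Q<Q_+$ would give $(Q_+)_-=Q<Q_+$), so $(\hat{P}_w)_+=\hat{P}_w$ and one may take $Q_n\in\NNN$ with $Q_n>\hat{P}_w$ and $Q_n\searrow\hat{P}_w$. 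The vectors $w_n=Q_n^{\perp}w$ then satisfy $\hat{P}_{w_n}\geq Q_n>\hat{P}_x$ and $\hat{P}_{w_n}\geq P_y$, so $w_n\otimes y\in\AAA$ and the strict case gives $w_n\otimes y\in\LLL$; since $\hat{P}_ww=0$ we get $w_n\to\hat{P}_w^{\perp}w=w$, and norm closedness of $\LLL$ completes the proof.
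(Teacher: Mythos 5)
Your proposal is correct and takes essentially the same route as the paper: the strict cases $P_z<P_y$ and $\hat{P}_x<\hat{P}_w$ are handled by the very same single Lie brackets (with auxiliary vectors $P_z^{\perp}y$ and $\hat{P}_wx$, respectively), and the boundary cases by the same approximation argument combining continuity of the nest with norm closedness of $\LLL$. The only cosmetic differences are the order of the bracket and your slightly more explicit construction of the approximating sequences as $Q_nz$ and $Q_n^{\perp}w$.
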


   \begin{proof}
To prove assertion (i), firstly, suppose that $P_z< P_y$ and  let   $y^{\prime}$ be the non zero element of $\HHH$ defined by
$$
y^{\prime}=P_z^\perp y.
$$ Since  $y^{\prime}\otimes z$ lies in $\AAA$, the bracket 
%\[
 \begin{align*}
[x\otimes y,y^{\prime}\otimes z]&=<z,x>(y^{\prime}\otimes y)-<y,y^{\prime}>(x\otimes z)\\
&=-<y,y^{\prime}>(x\otimes z)=-\|y^{\prime}\|^2(x\otimes z)
\end{align*}
%\]
lies in $\LLL$, yielding that also  $x\otimes z$ lies in $\LLL$.

If $P_z= P_y$, then 
\[
P_y(\HHH)=P_z(\HHH)=\fecho{\bigcup_{P\in \NNN, P< P_z}P(\HHH)},
\]
and, consequently, there exists a sequence $(z_n)$ in 
$$
\bigcup_{P\in \NNN, P< P_z}P(\HHH)
$$
converging to $z$ in the norm topology. It follows that  $(x\otimes z_n)$  is a convergent sequence in  $\LLL$ whose limit $ x\otimes z$ also lies in $\LLL$.

To prove assertion (ii), suppose  initially that  $ \hat{P}_x< \hat{P}_w$.  
Let $z$ be the element of the Hilbert space $\HHH$ defined by 
\[
z=\hat{P}_w x.
\]The operator 
%\[
\begin{align*}
[x\otimes y,w\otimes z]&=<z,x>(w\otimes y)-<y,w>(x\otimes z)\\
&=\|\hat{P}_w x\|^2(w\otimes y),
%\]
\end{align*}
lies in $\LLL$ and, therefore,  also $w\otimes y$ lies in $\LLL$.

%\item 
Suppose now that $\hat{P}_x= \hat{P}_w$.  
Let $(w_n)$ be a sequence in 
$$
\bigcup_{P\in \NNN, \hat{P}_x<P} P^\perp (\HHH)
$$
converging to $w$ in the norm topology. Since, for all $n$, the operator  $w_n\otimes y$ lies in $\LLL$ and the sequence $(w_n\otimes y)$ converges to $w\otimes y$  in the norm topology, it follows that $w\otimes y$ lies in $\LLL$.
\end{proof}

It is now possible to identify a whole ``corner'' of rank one operators  in $\AAA$ as also lying in the Lie  ideal $\LLL$.

 \begin{Theorem}\label{corner}Let $\AAA$ be a continuous nest algebra, let $\LLL$ be a norm closed Lie ideal of $\AAA$, let  $x\otimes y$ be a rank one operator lying in  $\LLL$ and   let $w\otimes z$ be a rank one operator  in the nest algebra $\AAA$ satisfying 
 $$
  \hat{P}_x\leq \hat{P}_w \quad \quad \quad  \text{and} \quad \quad \quad P_z\leq P_y.
  $$
 Then, the operator $w\otimes z$  lies in $\LLL$.
  \end{Theorem}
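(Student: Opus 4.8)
The plan is to obtain the conclusion from two successive applications of Lemma \ref{1}, linked by the auxiliary rank one operator $x\otimes z$. The useful preliminary observation is a restatement of membership in $\AAA$ in terms of the projections $P_\bullet$ and $\hat{P}_\bullet$: by the criterion recalled in Section \ref{prelim}, a rank one operator $a\otimes b$ lies in $\AAA$ exactly when $(P_b)_- a=0$, and since $\NNN$ is continuous one has $(P_b)_-=P_b$, so this amounts to $P_b a=0$, that is, to $P_b\leq\hat{P}_a$. I would record this equivalence first, as it is what makes the two order hypotheses of the theorem directly usable.

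First I would check that the corner operator $x\otimes z$ itself belongs to $\AAA$. Since $x\otimes y$ lies in $\AAA$, the equivalence above gives $P_y\leq\hat{P}_x$; combining this with the hypothesis $P_z\leq P_y$ yields $P_z\leq\hat{P}_x$, which is precisely the condition for $x\otimes z$ to lie in $\AAA$. With $x\otimes z\in\AAA$ and $P_z\leq P_y$ in hand, I would then apply Lemma \ref{1}(i) to the operator $x\otimes y\in\LLL$ to conclude that $x\otimes z\in\LLL$.

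Finally I would invoke Lemma \ref{1}(ii), this time regarding $x\otimes z\in\LLL$ as the given operator (so that $z$ plays the role of the fixed second-slot element and $x$ the first). By hypothesis $w\otimes z$ lies in $\AAA$ and $\hat{P}_x\leq\hat{P}_w$, so the lemma delivers $w\otimes z\in\LLL$, which is the desired conclusion. The argument is in essence a careful matching of the hypotheses of the two parts of Lemma \ref{1}; the only substantive step is verifying that $x\otimes z\in\AAA$, where the continuity of the nest and the two inequalities $P_z\leq P_y$ and $P_y\leq\hat{P}_x$ combine. I therefore do not anticipate a real obstacle beyond keeping track of which Hilbert space element occupies which slot of each rank one operator when the lemma is applied.
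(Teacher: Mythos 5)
Your proposal is correct and follows essentially the same route as the paper: two successive applications of Lemma \ref{1} through the intermediate operator $x\otimes z$ (the paper's one-line proof does exactly this, modulo an apparent swap of the labels (i) and (ii)). Your extra step verifying that $x\otimes z$ lies in $\AAA$ (via $P_z\leq P_y\leq \hat{P}_x$ and the continuity of the nest) is a hypothesis of Lemma \ref{1}(i) that the paper leaves implicit, so including it is a welcome refinement rather than a divergence.
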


  \begin{proof}By Lemma \ref{1} (ii), the operator $x\otimes z$ lies in $\LLL$ and applying  (i) of the same lemma to this operator, it follows that $w\otimes z$ lies in  $\LLL$.
\end{proof}

\begin{lemma}\label{finite rank aux1}
Let $\AAA$ be a continuous nest algebra, let $\LLL$ be a norm closed Lie ideal of $\AAA$ and let  
$$
T=\sum_{i=1}^{n}x_i\otimes y_i,
$$
 be a rank $n$ operator  lying in $\LLL$, with $n\geq 1$, where, for all $i, j=1, \dots,n$, the operator $x_i\otimes y_i$ lies in $\AAA$ and $\hat{P}_{x_i} <\hat{P}_{x_j}$, whenever $i<j$.
 Then, for all $i=1, \dots,n$, the rank one operator $x_i\otimes y_i$ lies in $\LLL$.
\end{lemma}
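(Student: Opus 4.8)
The plan is to induct on $n$. The case $n=1$ is trivial, so assume the assertion holds for every rank $n-1$ operator of the prescribed form, and let $T=\sum_{i=1}^{n}x_i\otimes y_i$ be as in the statement, with $\{x_i\}$ and $\{y_i\}$ each linearly independent. The idea is to peel off the summand $x_n\otimes y_n$ carrying the largest projection, by bracketing $T$ against the \emph{single} nest projection $Q:=\hat{P}_{x_n}$, which lies in $\AAA$.

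First I would record two facts that make this choice of $Q$ convenient. Since each $x_i\otimes y_i\in\AAA$, the membership criterion recalled in Section~\ref{prelim} gives $P_{y_i}\le\hat{P}_{x_i}$; as $\hat{P}_{x_i}\le\hat{P}_{x_n}=Q$ for every $i$, this yields $P_{y_i}\le Q$ and hence $Qy_i=y_i$ for all $i$. On the other hand $Qx_n=\hat{P}_{x_n}x_n=0$ by definition of $\hat{P}_{x_n}$. Using $(x\otimes y)Q=(Qx)\otimes y$ and $Q(x\otimes y)=x\otimes(Qy)$, one has
\[ [T,Q]=\sum_i (Qx_i)\otimes y_i-\sum_i x_i\otimes(Qy_i), \]
and substituting the two facts collapses the second sum to $T$ and deletes the $i=n$ term from the first. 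Consequently $A:=T+[T,Q]=\sum_{i=1}^{n-1}(Qx_i)\otimes y_i$ lies in $\LLL$, since $T\in\LLL$ and $Q\in\AAA$.

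Next I would verify that $A$ satisfies the hypotheses of the lemma with $n-1$ summands. Because $\NNN$ is totally ordered and $\hat{P}_{x_i}<\hat{P}_{x_n}=Q$ for $i<n$, a projection $R$ annihilates $Qx_i$ if and only if $R\le\hat{P}_{x_i}$, whence $\hat{P}_{Qx_i}=\hat{P}_{x_i}$; in particular the projections $\hat{P}_{Qx_i}$ are still strictly increasing and each $(Qx_i)\otimes y_i$ again lies in $\AAA$. The delicate point, and the step I expect to be the main obstacle, is to check that $A$ genuinely has rank $n-1$, that is, that $\{Qx_i\}_{i<n}$ is linearly independent; equivalently, that no nonzero $v=\sum_{i<n}\alpha_i x_i$ satisfies $Qv=0$. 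This is exactly where the strict ordering is essential: if $i_0$ is the least index with $\alpha_{i_0}\ne 0$, then $\hat{P}_{x_{i_0+1}}x_i=0$ for every $i>i_0$ (as $\hat{P}_{x_{i_0+1}}\le\hat{P}_{x_i}$), while $\hat{P}_{x_{i_0+1}}x_{i_0}\ne 0$ (as $\hat{P}_{x_{i_0}}<\hat{P}_{x_{i_0+1}}$); hence $\hat{P}_{x_{i_0+1}}v\ne 0$, which is incompatible with $Qv=0$ because $\hat{P}_{x_{i_0+1}}\le\hat{P}_{x_n}=Q$. Since the $x_i$ are independent, $v\ne 0$ for any nontrivial coefficients, and the independence of $\{Qx_i\}_{i<n}$ follows.

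Finally I would run the induction and reassemble $T$. Applying the inductive hypothesis to $A$ yields $(Qx_i)\otimes y_i\in\LLL$ for each $i<n$. Since $\hat{P}_{Qx_i}=\hat{P}_{x_i}$ and $x_i\otimes y_i\in\AAA$, Lemma~\ref{1}(ii) upgrades each of these to $x_i\otimes y_i\in\LLL$. The remaining summand is then recovered by subtraction, $x_n\otimes y_n=T-\sum_{i<n}x_i\otimes y_i\in\LLL$, which completes the inductive step and the proof.
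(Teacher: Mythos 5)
Your proof is correct and follows essentially the same route as the paper: both peel off the top summand by bracketing $T$ with $\hat{P}_{x_n}$ to obtain $\sum_{i<n}(\hat{P}_{x_n}x_i)\otimes y_i\in\LLL$, verify via the strict ordering of the $\hat{P}_{x_i}$ that this operator still has rank $n-1$, and use the identity $\hat{P}_{\hat{P}_{x_n}x_i}=\hat{P}_{x_i}$ together with Lemma \ref{1}(ii)/Theorem \ref{corner} to recover the original summands. Your organisation as a clean induction on $n$ is slightly tidier than the paper's explicit descent $T_1, T_2,\dots,T_{n-1}$ followed by back substitution, but the underlying argument is the same.
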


\begin{proof} The assertion trivially holds for $n=1$. Suppose now that   $n$ is greater than 1. The operator 
\begin{align*}
\left[ \hat{P}_{x_n},T \right]  &=\hat{P}_{x_n}\Bigl( \sum_{i=1}^{n}x_i\otimes y_i\Bigr)-\Bigl( \sum_{i=1}^{n}x_i\otimes y_i\Bigr)\hat{P}_{x_n}\\
&= \sum_{i=1}^{n}x_i\otimes(\hat{P}_{x_n} y_i)- \sum_{i=1}^{n}(\hat{P}_{x_n}x_i)\otimes y_i\\
&= \sum_{i=1}^{n}x_i\otimes y_i - \sum_{i=1}^{n-1}(\hat{P}_{x_n}x_i)\otimes y_i\\
&=T- \sum_{i=1}^{n-1} (\hat{P}_{x_n}x_i)\otimes y_i
\end{align*}
lies in $\LLL$. If $n$ coincides with 2, then $(\hat{P}_{x_2}x_1)\otimes y_1$ lies in $\LLL$ and, by Theorem \ref{corner}, the operator $x_1\otimes y_1$ lies in $\LLL$. Hence,   $x_2\otimes y_2$ also lies in $\LLL$ and  thus the proof is complete.  
In the case of $n$ being greater than 2,  
  observe that the subset $\mathfrak{X}$ of the Hilbert space $\HHH$ defined by  
   $$
 \mathfrak{X}=\{\hat{P}_{x_n}x_i:i=1, \dots , n-1\}
 $$
 is linearly independent. In fact, if $\alpha_{1}, \alpha_{2}, \dots, \alpha_{n-1}$ are scalars such that 
\[
\alpha_{1} \hat{P}_{x_n}x_1 +\alpha_{2} \hat{P}_{x_n}x_2 +\dots+\alpha_{n-1} \hat{P}_{x_n}x_ {n-1}=0,
\]
then, since  
$$
\hat{P}_{x_1} <\hat{P}_{x_2}<\dots<\hat{P}_{x_{n-1}}<\hat{P}_{x_n},
$$
it follows that 
\begin{align*}
\hat{P}_{x_2}(\alpha_{1} \hat{P}_{x_n}x_1 +\alpha_{2} \hat{P}_{x_n}x_2 +\dots+\alpha_{n-1} \hat{P}_{x_n}x_ {n-1})&=\alpha_{1}\hat{P}_{x_2} x_1\\
&=0.
\end{align*}
Consequently, $\alpha_{1}$ must be equal to zero since $\hat{P}_{x_2} x_1$ is different from zero. Similarly,  using now  the projection $\hat{P}_{x_3}$ and the equality 
\[
\alpha_{2} \hat{P}_{x_n}x_2 +\dots+\alpha_{n-1} \hat{P}_{x_n}x_ {n-1}=0,
\]
one has that $\alpha_{2}$ must coincide with zero. It is clear that a repetition of this reasoning 
yields  that, for all $i$ in the set $\{1, \dots, n-1\}$, the scalar  $\alpha_{i}$ must coincide with  zero and, therefore,  the set $\mathfrak{X}$ 
 is linearly independent. Moreover, since the set $\{y_i:i=1, \dots , n\}$ is   linearly independent, and thus also the set $\{y_i:i=1, \dots , n\}$ is   linearly independent, it follows that  the operator 
 
 $$
 T_1=\sum_{i=1}^{n-1}(\hat{P}_{x_n}x_i)\otimes y_i
 $$
  has rank $n-1$ and lies in $\LLL$.

Analogously, one obtains  the equality 
$$ 
\left[ \hat{P}_{x_{n-1}},T_1 \right]  =T_1- \sum_{i=1}^{n-2} (\hat{P}_{x_{n-1}}x_i)\otimes y_i 
$$
for the operator $T_1$ and a similar reasoning yields that the rank $n-2$ operator 
$$
T_2=\sum_{i=1}^{n-2} (\hat{P}_{x_{n-1}}x_i)\otimes y_i
$$ lies in $\LLL$. 
Repeating this procedure as many times as required, one has  that 
$$
T_{n-1}=(\hat{P}_{x_{2}}x_1)\otimes y_1
$$ lies in $\LLL$. Hence, by Theorem \ref{corner} and observing that, for all $i=2,\dots, n$, 
$$
\hat{P}_{\hat{P}_{x_{i}}x_1}=\hat{P}_{x_{1}},
$$
it follows  that the operator $x_1\otimes y_1$ and also all the  operators $(\hat{P}_{x_{i}})x_1\otimes y_1$ lie in  the ideal $\LLL$. 

Back substitution in the equality  $$
T_{n-2}=(\hat{P}_{x_{3}}x_1)\otimes y_1+(\hat{P}_{x_{3}}x_2)\otimes y_2,
$$
similarly yields that, 
for all $i=3,\dots, n$, the operator $(\hat{P}_{x_{3}}x_2)\otimes y_2$ and also the operator $x_2\otimes y_2$ lie in $\LLL$. Clearly, the proof is complete after repeating this reasoning sufficiently many times.
\end{proof}

%%%%%%%%%%%%%%%%%%%%%%%%%%%%%%%%%%%%%%%
%                             finite rank aux2
%%%%%%%%%%%%%%%%%%%%%%%%%%%%%%%%%%%%%%%
 \begin{lemma}\label{finite rank aux2}
Let $\AAA$ be a continuous nest algebra, let $\LLL$ be a norm closed Lie ideal of $\AAA$ and let
$$
T=\sum_{i=1}^{n}x_i\otimes y_i,
$$
be a rank $n$ operator  lying in $\LLL$, with $n\geq 1$, where, for all $i, j=1, \dots,n$, the operator $x_i\otimes y_i$ lies in $\AAA$ and $\hat{P}_{x_i} =\hat{P}_{x_j}$.
Then, $T$ can be written as  a finite sum of rank one operators lying  in $\LLL$.

\end{lemma}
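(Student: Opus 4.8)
The plan is to exploit that the hypothesis $\hat{P}_{x_i}=\hat{P}_{x_j}$ confines $T$ to a single ``corner'' of $\BBB(\HHH)$. Write $P$ for the common value $\hat{P}_{x_1}=\dots=\hat{P}_{x_n}$. Then $Px_i=0$ for every $i$, and since $x_i\otimes y_i\in\AAA$ forces $P_{y_i}\le\hat{P}_{x_i}=P$, one also has $Py_i=y_i$ for every $i$. Consequently $T=PTP^\perp$, so $T$ lies in the corner $P\BBB(\HHH)P^\perp$. I would then try to show that this single corner must contain enough rank one operators of $\LLL$ to rebuild $T$.

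The engine is the bracket with block operators. Splitting $\HHH=P(\HHH)\oplus P^\perp(\HHH)$, every $A\in\AAA$ is block upper triangular (its part $P^\perp AP$ vanishes), and the compressions $A_1=PAP$ and $A_2=P^\perp AP^\perp$ lie in the nest algebras $\AAA_1,\AAA_2$ of the continuous nests induced on $P(\HHH)$ and on $P^\perp(\HHH)$. A direct computation shows that for block diagonal $A=A_1\oplus A_2$ one has $[A,T]=A_1T-TA_2$, again in the corner. Taking $A_2=0$, respectively $A_1=0$, shows that $A_1T\in\LLL$ and $TA_2\in\LLL$ for all $A_1\in\AAA_1$ and $A_2\in\AAA_2$; that is, $\LLL\cap P\BBB(\HHH)P^\perp$ is a norm closed $\AAA_1$--$\AAA_2$ bimodule containing $T$. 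Choosing $A_1$ to be a rank one operator $\eta\otimes\zeta$ in $\AAA_1$ gives $A_1T=u\otimes\zeta\in\LLL$, where $u=\sum_i\intern{\eta,y_i}x_i$ lies in $\sspan\{x_1,\dots,x_n\}$.

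The extraction step is to choose $\eta$ and $\zeta$ so that $u\otimes\zeta$ is a nonzero rank one with $\hat{P}_u=P$ and with $P_\zeta$ as large as the nest permits. Using continuity of $\NNN$ I would take $\eta$ in a slice $(P-Q)(\HHH)$ with $Q<P$ close to $P$; then $Q\le\hat{P}_\eta$, so any $\zeta$ with $P_\zeta\le Q$ yields $\eta\otimes\zeta\in\AAA_1$, while the independence of $y_1,\dots,y_n$ lets me arrange $u\neq0$ with $\hat{P}_u=P$ (the combinations with $\hat{P}_u>P$ form a union of proper subspaces). Having produced $u\otimes\zeta\in\LLL$ with $\hat{P}_u=P=\hat{P}_{x_i}$, Theorem \ref{corner} applies with $w=x_i$ and gives $x_i\otimes z\in\LLL$ for every $z$ with $P_z\le P_\zeta$. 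For indices with $P_{y_i}<P$ one may simply take $\zeta=y_i$; for the remaining indices, letting $P_\zeta\uparrow P$ and invoking the norm closedness of $\LLL$ exactly as in the limiting argument of Lemma \ref{1}(i) yields $x_i\otimes y_i\in\LLL$. Summing over $i$ writes $T$ as a finite sum of rank one operators in $\LLL$.

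I expect the main obstacle to be the orthogonality intrinsic to a continuous nest: every rank one $\eta\otimes\zeta$ in $\AAA_1$ satisfies $P_\zeta\le\hat{P}_\eta<P$ strictly, so no single bracket can reach a second vector $z$ with $P_z=P$, and the boundary indices (those with $P_{y_i}=P$) cannot be captured directly. This is precisely where norm closedness of $\LLL$ and the approximation of Lemma \ref{1}(i) become indispensable. The accompanying technical nuisance is to place $\eta$ high enough in the nest for the membership $\eta\otimes\zeta\in\AAA_1$ while still coupling to the $y_i$ so that $u\neq0$ and $\hat{P}_u=P$; continuity of $\NNN$ is what guarantees the room to do both simultaneously.
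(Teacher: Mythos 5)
Your corner decomposition $T=PTP^{\perp}$, the bimodule identities $A_1T\in\LLL$ and $TA_2\in\LLL$, and the formula $A_1T=u\otimes\zeta$ with $u=\tsum_i\intern{\eta,y_i}x_i$ are all correct, and your scheme does recover the conclusion in the non-degenerate case. The gap is in the extraction step ``arrange $u\neq0$ with $\hat{P}_u=P$''. The coefficient vectors you can reach with $\eta\in(P-Q)(\HHH)$ are not arbitrary: they form exactly the subspace $R_Q$ of $\complexos^{n}$ dual to $\sspan\{(P-Q)y_1,\dots,(P-Q)y_n\}$, and the independence of the $y_i$ in $\HHH$ says nothing about the independence of their truncations $(P-Q)y_i$. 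On the other side, the bad set $V=\{c:\hat{P}_{\sum_ic_ix_i}>P\}$ is the union of the chain of proper subspaces $\{c:Q'\tsum_ic_ix_i=0\}$, $Q'>P$, hence is itself a single proper subspace --- but nothing prevents $R_Q\subseteq V$ for every admissible $Q$. Concretely, take $n=2$ with $Q_1(x_1+x_2)=0$ for some $Q_1>P$, $P_{y_1}=P_{y_2}=P$ and $P_{y_1-y_2}=Q_0<P$. For $Q_0\le Q<P$ one has $(P-Q)y_1=(P-Q)y_2$, so every reachable $u$ is a multiple of $x_1+x_2$, whose $\hat{P}_u\geq Q_1>P$, and Theorem \ref{corner} never reaches $x_1$; for $Q<Q_0$ you can reach $u=x_1$, but only with $P_\zeta\le Q<Q_0$, so your limiting argument climbs to $Q_0$ and no further, while $P_{y_1}=P>Q_0$.

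This is not a repairable technicality, because the statement your plan is driving at --- that each $x_i\otimes y_i$ individually lies in $\LLL$ --- is false in general. In the configuration above, let $\LLL$ be the norm closed associative (hence Lie) ideal $\{S\in\AAA:\varphi(R)^{\perp}SR=0\ \text{for all}\ R\in\NNN\}$ for a left order continuous homomorphism $\varphi$ with $P_{y_1-y_2}\le\varphi(R)<P_{y_1}$ for $P<R\le Q_1$ and $\varphi(R)\ge P_{y_1}$ for $R>Q_1$; since $TR=(Rx_1)\otimes(y_1-y_2)$ on $(P,Q_1]$, one checks $T\in\LLL$ while $x_1\otimes y_1\notin\LLL$. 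The paper's proof is organised precisely around this obstruction: it sets $Q=\bigwedge\{R\in\NNN:\{Rx_1,\dots,Rx_n\}\ \text{linearly independent}\}$ and splits into cases. When $Q=\hat{P}_{x_1}$ (your implicit assumption, $V=\{0\}$) it performs the duality on the $x$ side --- the $TA_2$ half of your bimodule, bracketing with $w_j\otimes z_j$ where $z_j$ is dual to $Q_jx_k$ among the $Q_jx_i$, whose independence is guaranteed by the choice of the net $(Q_j)$ --- and does obtain each $x_k\otimes y_k\in\LLL$. When $Q>\hat{P}_{x_1}$ it settles for $T=TQ^{\perp}+TQ$, extracts the rank one operators $(Q^{\perp}x_k)\otimes y_k\in\LLL$, and runs an induction on the rank of $TQ$, which strictly drops because $\{Qx_i\}$ is dependent; the resulting summands are no longer the original $x_i\otimes y_i$. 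Any repair of your argument must build in this case distinction and rank reduction rather than aim at the individual $x_i\otimes y_i$.
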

 \begin{proof}The proof will be carried out for $n>1$, since the result trivially holds for $n=1$.  

Define, for all $P\in \NNN$, the set $\mathfrak{X}_P$ by
$$
\mathfrak{X}_P=\{Px_1, Px_2, \dots, Px_n\},
$$and define the projection $Q$ by 
$$ 
Q=\bigwedge \{P\in\NNN: \mathfrak{X}_P \, \,   \text{is linearly independent}\}.
$$
Observe that, since the operator $T$ has rank $n$, the sets $\{x_i:i=1, \dots , n\}$ and $\{y_i:i=1, \dots , n\}$ are linearly independent.

Depending on the situation, it may happen that either $\hat{P}_{x_1}=Q$ or  $\hat{P}_{x_1}<Q$. This proof is accordingly  divided in the two parts a) and b).
\begin{listrom}[a)]
\item If $\hat{P}_{x_1}=Q$, then there exists a decreasing net $(Q_j)$ of projections in the nest that converges to $\hat{P}_{x_1}$ in the order topology 
and such that, for all $j$, the set $\{Q_jx_i:i=1,  \dots, n\}$ is linearly independent.  

 Let $k$ be a positive integer lying in the set $ \{i=1,  \dots, n\}$, let
 $$
\SSS_k=\sspan \{Q_jx_i:i=1,  \dots, k-1,k+1, \dots,  n\}
$$
  and let
 
 $$
 Q_j(\HHH)=\SSS_k \oplus {\SSS_k}^\perp
 $$
 be the decomposition of the Hilbert space $Q_j(\HHH)$ into the direct sum of $\SSS$ and its orthogonal complement $\SSS^\perp$ in the space $Q_j(\HHH)$. 
 Let 
$$
Q_jx_k={(Q_jx_k)}_p+{(Q_jx_k)}_o
$$
be the orthogonal decomposition of $Q_jx_k$ in $Q_j(\HHH)$ relatively to the direct sum above. 
Let  $w_j$ and $z_j$ be the elements of the Hilbert space  $\HHH$ defined by
$$
w_j=Q_j^{\perp}x_k \,\, , \quad \quad \quad z_j=\frac{1}{\|{(Q_jx_k)}_o\|^2}{(Q_jx_k)}_o
$$
and consider the  
 operator   $w_j\otimes z_j$. Clearly, the operator $w_j\otimes z_j$ lies in the nest algebra $\AAA$ and, therefore,
the operator 
\begin{align*}
\left[T, w_j\otimes z_j\right]&=\sum_{i=1}^n \intern{z_j,x_i}(w_j\otimes y_i)-\sum_{i=1}^n \intern{y_i,w_j}(x_i\otimes z_j)\\
&=\sum_{i=1}^n \frac{1}{\|{(Q_jx_k)}_o\|^2}\intern{{(Q_jx_k)}_o,x_i}\bigl((Q_j^{\perp}x_k)\otimes y_i\bigr)\\
&=\sum_{i=1}^n \frac{1}{\|{(Q_jx_k)}_o\|^2}\intern{{(Q_jx_k)}_o,Q_jx_i}\bigl((Q_j^{\perp}x_k)\otimes y_i\bigr)\\
&=\frac{1}{\|{(Q_jx_k)}_o\|^2}\intern{{(Q_jx_k)}_o,Q_jx_k}\bigl((Q_j^{\perp}x_k)\otimes y_k\bigr)\\
&=\frac{1}{\|{(Q_jx_k)}_o\|^2}\intern{{(Q_jx_k)}_o,{(Q_jx_k)}_o}\bigl((Q_j^{\perp}x_k)\otimes y_k\bigr)\\
&=(Q_j^{\perp}x_k)\otimes y_k
\end{align*}
 lies in $\LLL$. 
Since the net $(Q_j)$ converges to $\hat{P}_{x_1}$ in the order topology, by \cite{Er}, Lemma 1, the net $(Q_j x_k)$ converges $ \hat{P}_{x_1}x_k$ in the norm topology. Observing that $ \hat{P}_{x_1}x_k$ coincides with zero, it follows that the net  $(Q_j^{\perp}x_k)$ converges to
%\longrightarrow 
$x_k$ and, hence, 
$$
(Q_j^{\perp}x_k)\otimes y_k\longrightarrow x_k\otimes y_k.
$$
Since the Lie ideal $\LLL$ is norm closed, one has that the operator $x_k\otimes y_k$ lies in $\LLL$, thus ending the proof of part a).

\item Suppose now that  the projection $\hat{P}_{x_1}$ is less than the projection $Q$.   Let $(P_j)$ be an increasing net converging to $Q$ in the order topology and  
let $\XXX_j$ be the set defined, for all $j$, by 
$$
\XXX_j=\{P_j{x_i}:i=1,  \dots, n\}.
$$

 By the definition of the projection $Q$, all the sets $\XXX_j$ are  linearly dependent or, equivalently,  for all $j$, the Grammian determinant $\Delta_j$, defined by 
    $$
\Delta_j = \det \begin{bmatrix}
\intern{P_jx_{i}, P_jx_{k}}
\end{bmatrix}_{i,k=1,\dots,n}
$$
 coincides with zero. 
  Since, by \cite{Er}, Lemma 1, for all $i=1,  \dots, n$, the net $(P_j{x_i})$ 
   converges to ${Qx_i}$ in the norm topology,   the continuity of the determinant function 
   yields that the net $(\Delta_j)$ converges to the Grammian determinant $\Delta$ defined by
    $$
\Delta = \det \begin{bmatrix}
\intern{Qx_{i}, Qx_{k}}
\end{bmatrix}_{i,k=1,\dots,n}.
$$
Therefore, the determinant $\Delta$ coincides with zero and the set
$\{ Q x_i:i= 1,\dots, n\}
$  is linearly dependent. 

It is now clear that the projection $Q$ cannot be equal to the identity $I$. In fact, if   $Q$ were equal to $I$, then the set  $\{ x_i:i=1,  \dots, n\}$ would be a  linearly dependent set, yielding a contradiction. 

Let $(Q_j)$ be a decreasing net  of projections in the nest converging to $Q$ in the order topology
and such that, for all $j$, the set $\{Q_jx_i:i=1,  \dots, n\}$ is linearly independent.

Let $k$ be a positive integer lying in the set $ \{i=1,  \dots, n\}$ and, similarly to part a) of this proof, let
 $$
\SSS_k=\sspan \{Q_jx_i:i=1,  \dots, k-1,k+1, \dots,  n\},
$$
let
 
 $$
 Q_j(\HHH)=\SSS_k \oplus {\SSS_k}^\perp
 $$ 
 and let 
$$
Q_jx_k={(Q_jx_k)}_p+{(Q_jx_k)}_o
$$
be the orthogonal decomposition of $Q_jx_k$ in $Q_j(\HHH)$ relatively to the direct sum above. Let  $w_j$ and $z_j$ be the elements of the Hilbert space  $\HHH$ defined by
$$
w_j=Q_j^{\perp}x_k \,\, , \quad \quad \quad z_j=\frac{1}{\|{(Q_jx_k)}_o\|^2}{(Q_jx_k)}_o
$$
and consider the operator   $w_j\otimes z_j$ in the nest algebra $\AAA$.
Similar computations to those of part a), yield that the operator $\left[T, w_j\otimes z_j\right]$ lies in the Lie ideal $\LLL$  and satisfies the equality 
\begin{align*}
\left[T, w_j\otimes z_j\right] 
&=(Q_j^{\perp}x_k)\otimes y_k.
\end{align*}

Since the net $(Q_j^{\perp})$ converges to $Q^{\perp}$ in the order topology, by \cite{Er}, Lemma 1, the net $(Q_j^{\perp} x_k)$ converges $Q^{\perp}x_k$ in the norm topology. Hence, it follows that, for all $k$ in the set $\{1, \dots, n\}$, 
 the operator $(Q^{\perp}x_k) \otimes y_k$ lies in $\LLL$, yielding that the operator $TQ^\perp$ can be written as a finite sum of rank one operators lying in $\LLL$.  
Consequently, $TQ^\perp$ and also $TQ$ lie in $\LLL$. 

 Let $n_1$ be the rank of $TQ$, and observe that $n_1<n$ since the set $\{Qx_i:i=1,2,\dots, n\}$ is linearly dependent. In fact, 
 \begin{align*}
 TQ&=\sum_{i=1}^{n}Qx_i \otimes y_i\\
 &=\sum_{r=1}^{n_1} x_{i_r}\otimes y^{\prime}_{i_r}, 
\end{align*}
 where the mapping $r\mapsto i_r$, from $\{1, \dots, n_1\}$ into $\{1, \dots, n\}$ is injective and, for all indices $r$,       
  the set $\{y^{\prime}_{i_r}: r=1, \dots , n_1\}$ is linearly independent.  Moreover, since each  $y^{\prime}_{i_r}$ is a linear combination of the elements of the set $\{y_i:i=1,\dots,n\}$, all the operators $x_{i_r}\otimes y^{\prime}_{i_r}$ lie in the nest algebra $\AAA$ and, consequently, the operator $TQ$ satisfies all the conditions of this lemma. 
  
Now it may happen that $TQ$ satisfies the condition of part a), in which case it follows immediately that $T$ is a finite sum of rank one operators lying in $\LLL$. Alternatively, if $TQ$ satisfies the condition of part b), then 
$$
TQ=TQQ_1+TQQ_1^{\perp},
$$
where $Q_1$ is the projection in the nest defined by
$$
Q_1=\bigwedge \{P\in\NNN: \mathfrak{X}_{1P} \, \,   \text{is linearly independent}\}
$$
and
$$
\mathfrak{X}_{1P}=\{Px_{i_r}: r=1,\dots, n_1\}.
$$ 
Applying the reasoning of part b) of the proof, either $TQQ_1$ satisfies the conditions in part a), thus ending the proof, or is an operator of rank $n_2$ less than $n_1$ such that 
$$
TQQ_1=TQQ_1Q_2+TQQ_1Q_2^{\perp},
$$
where $Q_2$ is a projection defined analogously to $Q_1$ above. 
 Repeating this procedure a sufficient (finite) number of times, either situation a) occurs at some point or 
 situation b) always happens. In the first hypothesis, the proof ends and in the second hypothesis, since the rank of the relevant operators strictly decreases in each step, it is eventually possible  to obtain a  rank one operator $TQQ_1Q_2Q_3\dots Q_l$ lying in $\LLL$, thus concluding the proof.
  \end{listrom}
 \end{proof}

%%%%%%%%%%% finite rank theorem %%%%%%%%
 \begin{theorem}\label{finite rank}
Let $\AAA$ be a continuous nest algebra, let $\LLL$ be a norm closed Lie ideal of $\AAA$ and let  $T$ be a  finite rank operator  
  in $\LLL$. Then $T$ can be written as a finite sum of rank one operators lying in $\LLL$.
 \end{theorem}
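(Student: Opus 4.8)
The plan is to reduce $T$ to a genuinely rank-one situation governed by the two preceding lemmas, which already dispose of the two extreme configurations. First I would invoke the classical decomposability of finite rank operators in a nest algebra (\cite{Er}, after Ringrose) to write $T=\sum_{i=1}^{n}x_i\otimes y_i$ with each $x_i\otimes y_i$ in $\AAA$ and $n$ equal to the rank of $T$, so that both $\{x_i\}$ and $\{y_i\}$ are linearly independent. Since the projections $\hat{P}_{x_i}$ all lie in the nest $\NNN$, they are totally ordered; grouping the indices on which $\hat{P}_{x_i}$ is constant partitions $\{1,\dots,n\}$ into blocks whose common values $\hat{Q}_1<\hat{Q}_2<\dots<\hat{Q}_m$ are strictly increasing. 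The case $m=1$ is exactly Lemma \ref{finite rank aux2}, and the case in which every block is a singleton is exactly Lemma \ref{finite rank aux1}; the task is therefore to interpolate between these two.

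The main step is to show that each block-sum $S_k=\sum_{\hat{P}_{x_i}=\hat{Q}_k}x_i\otimes y_i$ lies in $\LLL$, for then Lemma \ref{finite rank aux2} applies to each $S_k$ separately and summing the resulting rank-one operators gives the desired decomposition of $T$. To obtain this I would proceed by induction on the rank, peeling off the top block by left multiplication with $\hat{Q}_m$: since $\hat{Q}_m$ is a projection in $\NNN$ it lies in $\AAA$, and $\hat{Q}_m y_i=y_i$ for every $i$ (because $P_{y_i}\le\hat{P}_{x_i}\le\hat{Q}_m$), so that $\hat{Q}_mT=T+[\hat{Q}_m,T]$, where $[\hat{Q}_m,T]=-[T,\hat{Q}_m]\in[\LLL,\AAA]\subseteq\LLL$; hence $\hat{Q}_mT\in\LLL$. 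Moreover $\hat{Q}_mx_i=0$ precisely on the top block, so $\hat{Q}_mT=\sum_{\hat{P}_{x_i}<\hat{Q}_m}(\hat{Q}_mx_i)\otimes y_i$ has strictly smaller rank and, by the induction hypothesis, decomposes into rank-one operators in $\LLL$. The corner Theorem \ref{corner}, together with the identity $\hat{P}_{\hat{Q}_mx_i}=\hat{P}_{x_i}$, would then be used to transport membership in $\LLL$ from the compressed rank-one data back to the original operators $x_i\otimes y_i$ with $\hat{P}_{x_i}<\hat{Q}_m$, placing the lower part $\sum_{\hat{P}_{x_i}<\hat{Q}_m}x_i\otimes y_i$ in $\LLL$ and, by difference, putting the top block-sum $S_m$ in $\LLL$ as well.

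The hard part is that left multiplication by $\hat{Q}_m$ need not preserve linear independence inside a block: a nontrivial combination $\sum\alpha_ix_i$ may fall in the kernel of $\hat{Q}_m$, so the compressed operator can have rank strictly below the number of surviving terms and its intrinsic rank-one decomposition need not match the vectors $\hat{Q}_mx_i$ term by term. Reconciling this is exactly where the continuity of the nest enters, as in the proof of Lemma \ref{finite rank aux2}: one replaces the compression by a decreasing net of nest projections on which the relevant families are independent and passes to the norm limit via \cite{Er}, Lemma 1, while Theorem \ref{corner} and Lemma \ref{1} guarantee that only the associated projections $\hat{P}_{x_i}$ and $P_{y_i}$—and not the perturbed vectors themselves—need to be controlled. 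I expect the genuine obstacle to be organising this limiting argument so that the $\hat{P}_{x_i}$-side reduction and the behaviour of the $y_i$-side (through the projections $P_{y_i}$) are handled simultaneously, since it is the interplay of these two families that Lemma \ref{finite rank aux1} and Lemma \ref{finite rank aux2} were each designed to manage.
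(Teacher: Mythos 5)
Your overall strategy --- order the $\hat{P}_{x_i}$, compress $T$ by a nest projection through a commutator so as to kill a block and drop the rank, feed the two extreme configurations to Lemmas \ref{finite rank aux1} and \ref{finite rank aux2}, and transport membership back with Theorem \ref{corner} --- is the same as the paper's. Two issues, however. The first is a slip of sides: since $P_{y_i}\leq\hat{P}_{x_i}\leq\hat{Q}_m$ for every $i$, left multiplication gives $\hat{Q}_mT=T$ and achieves nothing, and the identity $\hat{Q}_mT=T+[\hat{Q}_m,T]$ is false. The operator you actually want is $T\hat{Q}_m=T-[\hat{Q}_m,T]=\sum_{\hat{P}_{x_i}<\hat{Q}_m}(\hat{Q}_mx_i)\otimes y_i$, which is what the paper forms (choosing the projection at the first change of block, $\hat{P}_{x_k}$ or $\hat{P}_{x_{m+1}}$, rather than at the top). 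This is repairable.

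The genuine gap is the one you flag yourself and then leave open. When $\{\hat{Q}_mx_i:\hat{P}_{x_i}<\hat{Q}_m\}$ is linearly dependent, the induction hypothesis only gives \emph{some} rank one decomposition of $T\hat{Q}_m$ inside $\LLL$, whose terms need not be the individual $(\hat{Q}_mx_i)\otimes y_i$; Theorem \ref{corner} then cannot be applied term by term, and neither the lower part $\sum_{\hat{P}_{x_i}<\hat{Q}_m}x_i\otimes y_i$ nor the block sum $S_m$ is shown to lie in $\LLL$. (Note that the paper never proves your intermediate claim that each block sum lies in $\LLL$; its recursion instead produces a residual operator that mixes blocks.) The paper closes this hole not by a further limiting argument but by explicit linear algebra: choose a maximal linearly independent subset $\{\hat{P}_{x_{m+1}}x_i:i\in J\}$, write the remaining compressed vectors as combinations $\hat{P}_{x_{m+1}}x_k=\sum_{j\in J}a_{kj}\hat{P}_{x_{m+1}}x_j$, regroup $T_m=\sum_{i\in J}(\hat{P}_{x_{m+1}}x_i)\otimes y_i^{\prime}$ so that Lemma \ref{finite rank aux2} applies, and then verify by a direct computation that the leftover operator $S$, obtained after subtracting the resulting rank one operators of $\LLL$, has rank strictly less than $n$ and still lies in $\LLL$, which is what makes the recursion terminate. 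Without this regrouping step, or a substitute for it, your induction does not close.
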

 %%%%%%%%%%% finite rank theorem %%%%%%%%

 \begin{proof}
 Clearly, the assertion holds if  $T=0$ or if $T$ is a rank one operator. 
 Let $T$ be an operator of rank $n\geq2$ lying in the  Lie Ideal $\LLL$.  
It is possible to write the operator $T$ as   

 $$
 T=\sum_{i=1}^{n}x_i\otimes y_i,
 $$
 where, for all $i=1,\dots, n$, the rank one operator $x_i\otimes y_i$ lies in $\AAA$ (cf. \cite{KD,Er}).
 Suppose, without loss of generality, that,   for all indices $i, j=1,\dots, n$, 
 
 $$
 i\leq j \, \, \Rightarrow \, \, \hat{P}_{x_i} \leq\hat{P}_{x_j}.
 $$
 
   This proof is divided  into part a) and part b)
  according to $\hat{P}_{x_1} <\hat{P}_{x_2}$ or $\hat{P}_{x_1} =\hat{P}_{x_2}$, respectively.

  \begin{listrom}[a)]
  \item Suppose that $\hat{P}_{x_1} <\hat{P}_{x_2}$. Either, for all indices $i, j=1,\dots, n$,
   
  $$
 i< j \, \, \Rightarrow \, \, \hat{P}_{x_i} <\hat{P}_{x_j},
 $$ 
     and the result immediately follows from Lemma \ref{finite rank aux1}, or there exists a positive integer $k<n$  such that
      $$
 \hat{P}_{x_1} <\dots <\hat{P}_{x_k}=\hat{P}_{x_{k+1}}.
 $$ 
  If this is the case, then the operator
 \begin{align*}
  T_{k-1}&=T-[\hat{P}_{x_{k}},T]\\
 &=T -\hat{P}_{x_{k}}T\hat{P}_{x_{k}}^{\perp}\\
 &=T -T\hat{P}_{x_{k}}^{\perp}
\end{align*}  lies in $\LLL$. Since,
 for all indices $i, j=1,\dots, k-1$, 
 
 $$
 i\leq j \, \, \Rightarrow \, \, \hat{P}_{x_i} <\hat{P}_{x_j}.
 $$
 the operator  
 $$
  T_{k-1}=\sum_{i=1}^{k-1}(\hat{P}_{x_k}x_i)\otimes y_i,
 $$is a rank $k-1$ operator,   and by Lemma \ref{finite rank aux1}, it follows that,  for all $i=1,\dots, k-1$,
 the operator $(\hat{P}_{x_k}x_i)\otimes y_i$ lies in $\LLL$. 
 Hence, by Theorem \ref{corner}, all the operators $x_1\otimes y_1, \dots, x_{k-1}\otimes y_{k-1}$ lie in the Lie ideal $\LLL$ and, therefore,  
 $$
 S=x_k\otimes y_k + x_{k+1}\otimes y_{k+1}+ \dots +x_{n}\otimes y_{n}
 $$
  is also an element of the Lie ideal $\LLL$.

\item  Suppose now that $\hat{P}_{x_1} =\hat{P}_{x_2}$. Either 
$$
\hat{P}_{x_1} =\hat{P}_{x_2}=\dots=\hat{P}_{x_n} 
$$
  and the result immediately follows from Lemma \ref{finite rank aux2}, or there exists a positive integer $m<n$  such that
 $$
 \hat{P}_{x_1} =\hat{P}_{x_2}=\dots =\hat{P}_{x_m}<\hat{P}_{x_{m+1}}.
 $$
 Hence, the operator  
  \begin{align*}
  T_{m}&=T-[\hat{P}_{x_{m+1}},T]\\
 &=T-\hat{P}_{x_{m+1}}T\hat{P}_{x_{m+1}}^{\perp}\\
&=T-T\hat{P}_{x_{m+1}}^{\perp}\\
 &=\sum_{i=1}^{m}(\hat{P}_{x_{m+1}}x_i)\otimes y_i,
\end{align*} 
 lies in $\LLL$. If the rank of $T_m$ equals $m$, by Lemma \ref{finite rank aux2}, the operator $T_m$ can be written as a finite sum of rank  one operators lying in $\LLL$.  By Theorem \ref{corner} and the proof of Lemma \ref{finite rank aux2}, for all $i=1,2,\dots, m$, the operator $({\hat{P}_{x_{m+1}}}^{\perp}x_i)\otimes y_i$ lies in $\LLL$. In fact, the operator $T_m$ either satisfies condition a) or condition b) in the proof of Lemma \ref{finite rank aux2}. In the first case, it was shown that, for all $i=1,2,\dots, m$, the operator $({\hat{P}_{x_{m+1}}} x_i)\otimes y_i$ lies in $\LLL$ and, consequently, by Theorem \ref{corner}, also the operator $({\hat{P}_{x_{m+1}}}^{\perp}x_i)\otimes y_i$ lies in $\LLL$.
 
 If, on the other hand, the operator $T_m$ satisfies condition b), the proof of Lemma \ref{finite rank aux2} shows that there exists a projection $Q$ in the nest with 
 \[
 {\hat{P}_{x_{m}}} < Q < {\hat{P}_{x_{m+1}}}, \quad \quad \quad \quad \hat{P}_{Q^\perp x_i}< {\hat{P}_{x_{m+1}}}
 \]
and such that, for all  $i=1,2,\dots, m$, the rank one operator $(Q^\perp x_i)\otimes y_i$ lies in $\LLL$. Theorem \ref{corner} now guarantees that,  for all $i=1,2,\dots, m$, the operator $({\hat{P}_{x_{m+1}}}^{\perp}x_i)\otimes y_i$ lies in $\LLL$. Observe that a similar reasoning to that used to show that $Q$ cannot be equal to $I$ can also be used to see that $Q$ is different from ${\hat{P}_{x_{m+1}}}$.

 It is now possible to conclude that  the equality

\begin{align}\label{equation1}
T&=T_m+\sum_{i=1}^m({\hat{P}_{x_{m+1}}}^{\perp}x_i)\otimes y_i+\sum_{i=m+1}^nx_i\otimes y_i,
\end{align}
yields that the operator $S$ defined by
$$
S=\sum_{i=m+1}^nx_i\otimes y_i
$$
 lies in the Lie ideal $\LLL$.

 In the case of  the rank of  $T_m$ being less than $m$, the set $\{\hat{P}_{x_{m+1}}x_i:i=1, \dots,m\}$ is linearly dependent. There exists, nevertheless,  a subset $J$ of $\{i=1, \dots,m\}$ such that $\{\hat{P}_{x_{m+1}}x_i:i\in J\}$ is a maximal linearly independent subset of $\{\hat{P}_{x_{m+1}}x_i:i=1, \dots,m\}$ and therefore, for all $k\in J_d$, where 
 $$
 J_d=\{1, \dots,m\}\backslash J,
 $$
 the identity
 $$
 \hat{P}_{x_{m+1}}x_k=\sum_{j\in J} a_{kj}\hat{P}_{x_{m+1}}x_j
 $$
 holds.
 Consequently, the operator $T_m$ can be re-written as 

 \begin{align*}
T_m &=\sum_{i\in J}(\hat{P}_{x_{m+1}}x_i)\otimes y_i+\sum_{k\in J_d}(\hat{P}_{x_{m+1}}x_k)\otimes y_k\\
 &=\sum_{i\in J}(\hat{P}_{x_{m+1}}x_i)\otimes y_i+\sum_{k\in J_d}\Bigl(\sum_{j\in J} (a_{kj}\hat{P}_{x_{m+1}}x_j)\Bigr)\otimes y_k\\
 &=\sum_{i\in J}(\hat{P}_{x_{m+1}}x_i)\otimes y_i+\sum_{j\in J} \Bigl(\sum_{k\in J_d}(a_{kj}\hat{P}_{x_{m+1}}x_j)\otimes y_k\Bigr)\\
  &=\sum_{i\in J}(\hat{P}_{x_{m+1}}x_i)\otimes y_i+\sum_{j\in J} \Bigl(\sum_{k\in J_d}(\hat{P}_{x_{m+1}}x_j)\otimes \bar{a}_{kj}y_k\Bigr)\\
 &=\sum_{i\in J}(\hat{P}_{x_{m+1}}x_i)\otimes y_i+\sum_{j\in J} \Bigl((\hat{P}_{x_{m+1}}x_j)\otimes \sum_{k\in J_d}\bar{a}_{kj}y_k\Bigr)\\
 &=\sum_{i\in J}(\hat{P}_{x_{m+1}}x_i)\otimes \Bigl(y_i+\sum_{k\in J_d}\bar{a}_{ki}y_k\Bigr)=\sum_{i\in J} (\hat{P}_{x_{m+1}}x_i)\otimes y^{\prime}_i,
%\\
\end{align*}
where the sets $\{\hat{P}_{x_{m+1}}x_i: i\in J\}$ and $\{y^{\prime}_i: i\in J\}$ are linearly independent.
  By Lemma \ref{finite rank aux2}, the operator $T_m$ can be written as a finite sum of rank  one operators lying in $\LLL$.  
 Observe also that, by Theorem \ref{corner} and the proof of Lemma \ref{finite rank aux2}, for all $i\in J$, the operator $({\hat{P}_{x_{m+1}}}^{\perp}x_i)\otimes y^{\prime}_i$ lies in $\LLL$.

It follows from the equality (\ref{equation1}) 
that the operator 
\[
S_m=\sum_{i=1}^m({\hat{P}_{x_{m+1}}}^{\perp}x_i)\otimes y_i+\sum_{i=m+1}^nx_i\otimes y_i.
\]
lies in the ideal.

It is now necessary to show that $S_m$ can be written as a finite sum of rank one operators in $\LLL$.    
 Since 
 
  \begin{align*} 
S_m&=\sum_{i\in J}({\hat{P}_{x_{m+1}}}^{\perp}x_i)\otimes y_i+\sum_{i\in J_d}({\hat{P}_{x_{m+1}}}^{\perp}x_i)\otimes y_i+\sum_{i=m+1}^nx_i\otimes y_i\\
&=\begin{multlined}[t]\sum_{i\in J}({\hat{P}_{x_{m+1}}}^{\perp}x_i)\otimes \Bigl(y_i+\sum_{k\in J_d}\bar{a}_{ki}y_k \Bigr)- \sum_{i\in J}(\hat{P}^{\perp}_{x_{m+1}}x_i)\otimes  \Bigl(\sum_{k\in J_d}\bar{a}_{ki}y_k \Bigr)+\\ 
+\sum_{i\in J_d}({\hat{P}_{x_{m+1}}}^{\perp}x_i)\otimes y_i+\sum_{i=m+1}^nx_i\otimes y_i\end{multlined}\\
&=\begin{multlined}[t]\sum_{i\in J}({\hat{P}_{x_{m+1}}}^{\perp}x_i)\otimes y^{\prime}_i -\sum_{k\in J_d}  \Bigl(\sum_{i\in J}({a}_{ki}\hat{P}^{\perp}_{x_{m+1}}x_i)\otimes y_k \Bigr)+\\
 +\sum_{i\in J_d}({\hat{P}_{x_{m+1}}}^{\perp}x_i)\otimes y_i
 +\sum_{i=m+1}^nx_i\otimes y_i
 \end{multlined}\\
 &=\begin{multlined}[t]\sum_{i\in J}({\hat{P}_{x_{m+1}}}^{\perp}x_i)\otimes y^{\prime}_i +\sum_{k\in J_d}  \Bigl(\sum_{i\in J} \Bigl({\hat{P}_{x_{m+1}}}^{\perp}x_k-{a}_{ki}\hat{P}^{\perp}_{x_{m+1}}x_i \Bigr) \Bigr)\otimes y_k+\\
 +\sum_{i=m+1}^nx_i\otimes y_i,
  \end{multlined}\\
\end{align*} 
where the first summand is a sum of rank one operators in $\LLL$, it follows that the operator 
$$
S=\sum_{k\in J_d}  \Bigl(\sum_{i\in J} \Bigl({\hat{P}_{x_{m+1}}}^{\perp}x_k-{a}_{ki}\hat{P}^{\perp}_{x_{m+1}}x_i \Bigr) \Bigr)\otimes y_k+\sum_{i=m+1}^nx_i\otimes y_i
$$
lies in $\LLL$ and has rank $l$ less than $n$.  The operator $S$ can be re-written as a sum of $l$ rank one operators lying in $\AAA$ (cf.  \cite{KD, Er}). 
\end{listrom}

 Applying again the same reasoning to either of the operators $S$ of part a) or part b), according to the case, it is possible to conclude, after a finite number of steps, that $T$ can be written as a finite sum of rank one operators in $\LLL$.
 \end{proof} 
 
\section{A characterisation theorem}\label{character}
Recall that a mapping $\varphi:\NNN \rightarrow \NNN$, defined on a nest $\NNN$, is called a \emph{homomorphism} if, for all projections $P$ and $Q$ in $\NNN$,

$$
P\leq Q \quad \Longrightarrow \quad \varphi(P)\leq \varphi(Q).
$$
A homomorphism $\varphi$ is said to 
be \textita{left order continuous} if, for all subsets $\MMM $ of 
the nest $\NNN $,  the projection $\varphi (\bigvee \MMM ) $ is equal to the 
supremum $\bigvee \varphi(\MMM)$. 

\begin{proposition}\label{order homomorphism}Let $\AAA$ be a continuous nest algebra associated to a   nest $\NNN$ and let $\LLL$ be a norm closed Lie ideal of $\AAA$. Then, the mapping $P\mapsto P^{\prime}$  defined, for all projections $P$ in the nest $\NNN$, by
\begin{align}\label{hom}
P^{\prime}=\bigvee \left\{P_y\in \NNN: x\otimes y \in \LLL\, \,  \wedge \, \, \hat{P}_x< P\right\}
\end{align}
 is a left order continuous homomorphism on $\NNN$. 
\end{proposition}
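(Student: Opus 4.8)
The plan is to verify the two defining properties separately—that $P\mapsto P'$ is a homomorphism (order preserving) and that it is left order continuous—after first noting that the map is well defined. Well-definedness is immediate: each $P'$ is a supremum of projections in $\NNN$, and since $\NNN$ is a complete nest this supremum again lies in $\NNN$ (with the convention that the supremum of the empty set is $0$).

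For the homomorphism property, I would fix projections $P\leq Q$ in $\NNN$ and compare the index sets defining $P'$ and $Q'$. If $x\otimes y\in\LLL$ satisfies $\hat{P}_x<P$, then $\hat{P}_x<P\leq Q$ gives $\hat{P}_x<Q$; hence the set $\{P_y: x\otimes y\in\LLL,\ \hat{P}_x<P\}$ is contained in $\{P_y: x\otimes y\in\LLL,\ \hat{P}_x<Q\}$, and taking suprema yields $P'\leq Q'$.

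For left order continuity, fix a subset $\MMM\subseteq\NNN$ and put $R=\bigvee\MMM$. The inequality $\bigvee_{P\in\MMM}P'\leq R'$ follows at once from the homomorphism property, since $P\leq R$ for every $P\in\MMM$. The reverse inequality $R'\leq\bigvee_{P\in\MMM}P'$ is where the argument has substance. Because $R'$ is the supremum of the projections $P_y$ arising from operators $x\otimes y\in\LLL$ with $\hat{P}_x<R$, it suffices to show that each such $P_y$ satisfies $P_y\leq\bigvee_{P\in\MMM}P'$. So I would fix one rank one operator $x\otimes y\in\LLL$ with $\hat{P}_x<R=\bigvee\MMM$ and seek a single member of $\MMM$ lying strictly above $\hat{P}_x$.

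The key step—and the only real obstacle—is precisely this: since $\bigvee\MMM$ is the \emph{least} upper bound of $\MMM$ and $\hat{P}_x$ lies strictly below it, $\hat{P}_x$ cannot itself be an upper bound of $\MMM$; as $\NNN$ is totally ordered, this forces some $P\in\MMM$ with $\hat{P}_x<P$. For this $P$ the pair $(x,y)$ qualifies in the definition of $P'$, so $P_y\leq P'\leq\bigvee_{Q\in\MMM}Q'$. Taking the supremum over all admissible $x\otimes y$ gives $R'\leq\bigvee_{P\in\MMM}P'$, and together with the easy inequality this establishes $\varphi(\bigvee\MMM)=\bigvee\varphi(\MMM)$, completing the proof. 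I expect no role here for the continuity of the nest or for the decomposition theorems of the previous section; the argument rests entirely on the least-upper-bound property and the total ordering of $\NNN$.
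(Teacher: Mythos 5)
Your proof is correct and follows essentially the same route as the paper: monotonicity by comparing the defining index sets, and left order continuity via the observation that $\hat{P}_x<\bigvee\MMM$ forces $\hat{P}_x<P$ for some $P\in\MMM$ by the least-upper-bound property and total ordering of $\NNN$. The only difference is cosmetic: the paper phrases monotonicity as a proof by contradiction and invokes Theorem \ref{corner} along the way, whereas your direct set-containment argument does the same work without that appeal (and you are right that neither the continuity of the nest nor the decomposition results are genuinely needed here).
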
 

\begin{proof}Let $P$ and $Q$ be projections in the nest $\NNN $ such that $   
P < Q   $. It will be shown that   the projection 
 $P^{\prime } $ is less than or equal to the projection $ Q^{\prime }    $.

If the projection $Q^{\prime }$ were less than the projection $P^{\prime }   $, then there would exist an operator $x\otimes y$ 
 in $\LLL   $ 
such that 
$$
{\hat{P } }_x < P   \, , \quad \quad \quad   Q^{\prime }   <{P 
}_{y  }.
$$ 
Therefore, by Theorem \ref{corner}, all operators $   w\otimes z$, such that 
$ {\hat{P } }_{x } \leq {\hat{P } }_{w  }  $
 and ${P }_{z  } \leq {P }_{y  }  $, would lie in  $\LLL   $. 
Hence,  
\[  Q^{\prime }< \bigvee \left\{ {P }_{z  }  \in \NNN  :   w\otimes z 
\in \LLL ,\,\,  {\hat{P } }_{w  } < Q \right\}   \text{ ,}\]
yielding a contradiction, and thus concluding the proof that the mapping is a homomorphism.

To show  that the order homomorphism $P\mapsto P^\prime$  is left order continuous on $\NNN$, let  $\MMM $ be a subset    of the nest $\NNN $, let $ {\MMM }^{\prime }$ be the set defined by
 $$
  {\MMM }^{\prime }=\bigl\{  P ^{\prime }   \in \NNN :   P\in \MMM   \bigr\}
 $$
 and let the projection $Q$ be the supremum 
of $\MMM $.

If the projection $Q$ lies in $\MMM $, then, since the mapping $P\mapsto
P^{\prime }$ is an order homomorphism,   the projection $Q^{\prime}$ 
coincides with the supremum $ \bigvee {\MMM }^{\prime
}$.

Suppose now that the supremum $Q$ does not lie in $\MMM $.    
It follows that
\begin{align*}
Q^{\prime }&= 
\bigvee \Bigl\{P_y\in \NNN: x\otimes y \in \LLL\, \,  \wedge \, \, \hat{P}_x< Q\Bigr\}\\
&= \bigvee \biggl(\bigcup _{P\in \NNN , P<Q} \Bigl\{ {P }_{y  }  \in \NNN  :  x\otimes y
\in \LLL   ,\,\,  {\hat{P } }_{x  }    <   P  \Big\} \biggr).
\end{align*} 

By Theorem \ref{corner} and  the fact that the mapping $P\mapsto P^{\prime }$ is an order
homomorphism, 
\begin{align*}
    Q^{\prime }&= \bigvee \biggl(\bigcup _{P\in \MMM } \Bigl\{ {P }_{y  }   \in \NNN :   
    x\otimes y  \in \LLL    ,\,\,  {\hat{P } }_{x  }    <   P  \Bigr\} 
\biggr)\\
                            %& &                  \\
          &= \bigvee  \bigl\{  P ^{\prime }   \in \NNN :   P\in \MMM   \bigr\} \\
                            %& &
                            &= \bigvee {\MMM }^{\prime } \text{ ,}
 \end{align*}
which concludes the proof.
\end{proof}

\begin{Lemma}\label{homomorphism rank 1}Let $\AAA$ be a continuous nest algebra associated to a   nest $\NNN$, let $\LLL$ be a norm closed Lie ideal of $\AAA$ and let the mapping $P\mapsto
P^{\prime }$ be the left order continuous homomorphism {\rm{(\ref{hom})}}. Then, a rank one operator $x\otimes y$ lies in $\LLL$ if and only if, for all projections $P$ in the nest $\NNN$,
$${P^{\prime}}^\perp(x\otimes y)P=0.$$ 
\end{Lemma}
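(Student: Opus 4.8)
The plan is to first reduce the stated identity to a condition purely about the projections $\hat{P}_x$, $P_y$ and the homomorphism. Since $(x\otimes y)P=(Px)\otimes y$, one has ${P^{\prime}}^\perp(x\otimes y)P=(Px)\otimes({P^{\prime}}^\perp y)$, which vanishes precisely when $Px=0$ or ${P^{\prime}}^\perp y=0$. As $Px=0$ is equivalent to $P\leq\hat{P}_x$, and ${P^{\prime}}^\perp y=0$ is equivalent to $P^{\prime}y=y$, that is, to $P_y\leq P^{\prime}$, the displayed condition holds for all $P$ if and only if $P_y\leq P^{\prime}$ for every projection $P$ in $\NNN$ with $\hat{P}_x<P$.

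For the forward implication, assume $x\otimes y\in\LLL$ and fix $P\in\NNN$. If $P\leq\hat{P}_x$ then $Px=0$ and there is nothing to prove; if $\hat{P}_x<P$, then $x\otimes y$ is itself one of the operators occurring in the supremum {\rm{(\ref{hom})}} defining $P^{\prime}$, whence $P_y\leq P^{\prime}$ and ${P^{\prime}}^\perp(x\otimes y)P=0$. This direction requires nothing beyond the definition of $P^{\prime}$.

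The substance of the proof lies in the converse. Assume $P_y\leq P^{\prime}$ whenever $\hat{P}_x<P$. Since $P^{\prime}=\bigvee\{P_z: w\otimes z\in\LLL,\ \hat{P}_w<P\}$ is a supremum of projections lying in the nest, its range is $\fecho{\bigcup\{P_z(\HHH): w\otimes z\in\LLL,\ \hat{P}_w<P\}}$, and $P_y\leq P^{\prime}$ gives $y\in P^{\prime}(\HHH)$. Thus, for each $P>\hat{P}_x$ and each $\varepsilon>0$, one may choose a rank one operator $w\otimes z\in\LLL$ with $\hat{P}_w<P$ together with a vector $\zeta\in P_z(\HHH)$ satisfying $\norm{\zeta-y}<\varepsilon$. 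I would then apply Theorem \ref{corner} to $w\otimes z$: setting $a=\hat{P}_w^\perp x$, one has $\hat{P}_w\leq\hat{P}_a$ (because $\hat{P}_w a=0$) and $P_\zeta\leq P_z$ (because $\zeta\in P_z(\HHH)$), while $a\otimes\zeta$ lies in $\AAA$ since $P_\zeta\leq P_z\leq\hat{P}_w\leq\hat{P}_a$; hence $(\hat{P}_w^\perp x)\otimes\zeta$ lies in $\LLL$.

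It remains to let $P\downarrow\hat{P}_x$ (using continuity of $\NNN$, which gives $\bigwedge\{P\in\NNN:\hat{P}_x<P\}=\hat{P}_x$) and $\varepsilon\to 0$ simultaneously. Then $\zeta\to y$, while $\norm{(\hat{P}_w^\perp x)-x}=\norm{\hat{P}_w x}\leq\norm{Px}$, since $\hat{P}_w<P$ forces $\hat{P}_w x=\hat{P}_w(Px)$; by \cite{Er}, Lemma 1, the order convergence $P\downarrow\hat{P}_x$ yields $Px\to\hat{P}_x x=0$ in norm, so $\hat{P}_w^\perp x\to x$. Consequently the operators $(\hat{P}_w^\perp x)\otimes\zeta\in\LLL$ converge in norm to $x\otimes y$, and the norm-closedness of $\LLL$ gives $x\otimes y\in\LLL$. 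The main obstacle is exactly that the supremum {\rm{(\ref{hom})}} need not be attained and the contributing operators may have $\hat{P}_w$ strictly above $\hat{P}_x$, so that no single one of them can be brought down to $x$ by Theorem \ref{corner} alone; the device that overcomes this is to couple the approximation of $y$ with the passage $P\downarrow\hat{P}_x$, which simultaneously drives the admissible $\hat{P}_w$ back towards $\hat{P}_x$ (controlling $\norm{\hat{P}_w x}$ by $\norm{Px}$) and keeps $y$ in the closed span of the available ranges, leaving the conclusion to norm-closedness of $\LLL$.
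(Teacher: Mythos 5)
Your proposal is correct and follows essentially the same route as the paper: the forward implication is read off from the definition of $P^{\prime}$, and the converse combines Theorem \ref{corner} with the description of $P^{\prime}$ as a supremum, the continuity of the nest (so that projections strictly above $\hat{P}_x$ decrease to it), Lemma 1 of \cite{Er}, and the norm-closedness of $\LLL$. The only difference is organizational: the paper first proves that every $w\otimes z$ with $P\leq \hat{P}_w$ and $P_z\leq P^{\prime}$ lies in $\LLL$ and then approximates $x\otimes y$ by $(Q^\perp x)\otimes y$ with $Q\downarrow \hat{P}_x$, whereas you merge the approximation of $y$ inside $P^{\prime}(\HHH)$ and the approximation of $x$ into a single simultaneous limit.
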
 
It should be observed that any rank one operator satisfying the above condition must lie in the nest algebra $\AAA$.
\begin{proof}If   $x\otimes y$ is a rank one operator  in $ \LLL $ then, for all projections $P$ in the nest, 

 \[
{P^{\prime}}^\perp(x\otimes y)P=(Px)\otimes {P^{\prime}}^\perp y
\]
coincides with zero. 
In fact, either $P\leq \hat{P}_x$, which leads to $Px$ being equal to zero, or $ \hat{P}_x< P$, which implies, by the definition of the order homomorphism $P\mapsto P^{\prime}$, that $P_y \leq P^{\prime}$ and therefore ${P^{\prime}}^\perp y$ coincides with zero.

Conversely, let $x\otimes y$ be  such that, for all projections $P$  in $\NNN$, the operator ${P^{\prime}}^\perp (x\otimes y)P$ 
coincides with zero.
Observe that, for any projection $P$ greater than $ \hat{P}_x$, 
 the element ${P^{\prime}}^\perp y$ must be equal to zero, i.e.,  
 $P_y\leq P^{\prime}$. 
 
 Notice also that   a rank one operator $w\otimes z$ such that $P\leq \hat{P}_w$ and  $P_z\leq P^{\prime}$ must lie in the norm closed ideal $\LLL$. Clearly, 
 by Theorem \ref{corner} and the definition of the mapping $P\mapsto P^{\prime}$, all operators $w\otimes z$, with $P\leq \hat{P}_w$ and   $P_z<P^{\prime}$, lie in $\LLL$. If $w\otimes z$ is such that $P\leq \hat{P}_w$ and   $P_z=P^{\prime}$, then there exists a sequence $(z_n)$ in 
 $$
\bigcup_{Q\in \NNN, Q< P^{\prime}}Q(\HHH), 
$$
 converging to $z$ in the norm topology. Hence $(w\otimes z_n)$ is a sequence in $\LLL$ whose limit $w\otimes z$ also   lies in $\LLL$.

It is possible to find a sequence $(x_n\otimes y)$  converging to $x\otimes y$ and such that, for all $n$, the projection $\hat{P}_{x}$ is less than the projection $\hat{P}_{x_n}$. Since, by the reasoning above, this sequence  lies in $\LLL$, it follows that $x\otimes y$ is an element of the ideal.
\end{proof}

 \begin{theorem}\label{homomorphism finite rank}
Let $\AAA$ be a continuous nest algebra associated to a   nest $\NNN$, let $\LLL$ be a norm closed Lie ideal of $\AAA$ and let the mapping $P\mapsto
P^{\prime }$ be the left order homomorphism {\rm{(\ref{hom})}}. 
Then a finite rank operator $T$ in the nest algebra $\AAA$ lies in $\LLL$ if and only if, for all projections $P$ in $\NNN$,
 $$
 {P^{\prime}}^{\perp}TP=0.
 $$
 \end{theorem}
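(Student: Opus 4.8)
The plan is to prove the two implications separately, using the decomposition theorem (Theorem~\ref{finite rank}) and the rank one characterisation (Lemma~\ref{homomorphism rank 1}) as the principal tools. The forward implication is immediate: if $T$ is a finite rank operator in $\LLL$, then by Theorem~\ref{finite rank} one may write $T=\sum_{k=1}^{m} w_k\otimes z_k$ with each $w_k\otimes z_k$ in $\LLL$, and by Lemma~\ref{homomorphism rank 1} each of these rank one operators satisfies ${P^{\prime}}^{\perp}(w_k\otimes z_k)P=0$ for every projection $P$ in $\NNN$; summing over $k$ yields ${P^{\prime}}^{\perp}TP=0$, as required.

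For the converse I would argue by induction on the rank $n$ of $T$, the case $n=1$ being exactly Lemma~\ref{homomorphism rank 1}. Given $T\in\AAA$ of rank $n\ge 2$ with ${P^{\prime}}^{\perp}TP=0$ for all $P$, write $T=\sum_{i=1}^{n}x_i\otimes y_i$ as a minimal sum of rank one operators in $\AAA$ (cf.~\cite{KD,Er}) and reorder so that $\hat{P}_{x_1}\le\dots\le\hat{P}_{x_n}$. Using the identities $(x\otimes y)P=(Px)\otimes y$ and ${P^{\prime}}^{\perp}(x\otimes y)=x\otimes({P^{\prime}}^{\perp}y)$, the hypothesis becomes $\sum_{i=1}^{n}(Px_i)\otimes({P^{\prime}}^{\perp}y_i)=0$ for every $P$. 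The key observation is that $Px_j=0$ precisely when $P\le\hat{P}_{x_j}$, so that evaluating this identity at projections $P$ just above the smallest value $\hat{P}_{x_1}$ isolates the bottom block of $T$.

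When $\hat{P}_{x_1}<\hat{P}_{x_2}$, choosing $P$ with $\hat{P}_{x_1}<P\le\hat{P}_{x_2}$ leaves only the single term $(Px_1)\otimes({P^{\prime}}^{\perp}y_1)=0$; since $Px_1\ne 0$ this forces $P_{y_1}\le P^{\prime}$, and, because $P\mapsto P^{\prime}$ is monotone (Proposition~\ref{order homomorphism}), the inequality $P_{y_1}\le P^{\prime}$ then holds for every $P>\hat{P}_{x_1}$. By Lemma~\ref{homomorphism rank 1} this means $x_1\otimes y_1\in\LLL$; subtracting it, $T-x_1\otimes y_1$ has rank $n-1$ and still satisfies the hypothesis, the condition being linear in $T$, so the inductive hypothesis completes this case.

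The remaining, and principal, difficulty is the case $\hat{P}_{x_1}=\dots=\hat{P}_{x_r}=P_0$ with $r\ge 2$, where a single evaluation no longer isolates one rank one operator: for $P$ in the range $P_0<P\le\hat{P}_{x_{r+1}}$ one obtains only $\sum_{i=1}^{r}(Px_i)\otimes({P^{\prime}}^{\perp}y_i)=0$. Here I would import the linear-independence analysis of Lemma~\ref{finite rank aux2}, setting $Q=\bigwedge\{P\in\NNN:\{Px_1,\dots,Px_r\}\text{ is linearly independent}\}$ and distinguishing $\hat{P}_{x_1}=Q$ from $\hat{P}_{x_1}<Q$. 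On the range of $P$ where $\{Px_i\}_{i\le r}$ is independent, the displayed identity forces ${P^{\prime}}^{\perp}y_i=0$, hence $P_{y_i}\le P^{\prime}$, for each $i\le r$; a net argument together with the norm closedness of $\LLL$, exactly as in part~a) of Lemma~\ref{finite rank aux2}, then yields that each $x_i\otimes y_i$ with $i\le r$ satisfies the rank one condition and so lies in $\LLL$. Peeling off this bottom block reduces the rank and allows the induction to proceed. I expect this coinciding case to be the main obstacle, precisely because the hypothesis ${P^{\prime}}^{\perp}TP=0$ controls $T$ only as a whole, and disentangling it into individual rank one constraints requires the Grammian and linear-independence machinery already developed for Theorem~\ref{finite rank}.
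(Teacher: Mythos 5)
Your forward implication is exactly the paper's: decompose $T$ via Theorem~\ref{finite rank} and apply Lemma~\ref{homomorphism rank 1} to each summand. The converse is where you diverge, and where there is a genuine gap. Your induction handles the case $\hat{P}_{x_1}<\hat{P}_{x_2}$ correctly (evaluating at $P\in(\hat{P}_{x_1},\hat{P}_{x_2}]$ does isolate $(Px_1)\otimes({P^{\prime}}^{\perp}y_1)$, and monotonicity of $P\mapsto P^{\prime}$ propagates $P_{y_1}\le P^{\prime}$ upward). But in the coinciding case $\hat{P}_{x_1}=\dots=\hat{P}_{x_r}$ your conclusion that ``each $x_i\otimes y_i$ with $i\le r$ satisfies the rank one condition and so lies in $\LLL$'' is not justified when $\hat{P}_{x_1}<Q$, and is in general false for the decomposition you fixed. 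On the range $\hat{P}_{x_1}<P< Q$ the vectors $Px_1,\dots,Px_r$ are linearly dependent, so the identity $\sum_{i=1}^{r}(Px_i)\otimes({P^{\prime}}^{\perp}y_i)=0$ forces only certain linear combinations of the ${P^{\prime}}^{\perp}y_i$ to vanish, not each one: e.g.\ if $Px_2=\lambda Px_1$ on that range one gets ${P^{\prime}}^{\perp}(y_1+\bar{\lambda}y_2)=0$ and nothing about $y_1$ alone. (There is also the possibility that $Q\ge\hat{P}_{x_{r+1}}$, in which case no projection $P\le\hat{P}_{x_{r+1}}$ makes $\{Px_i\}_{i\le r}$ independent and your argument produces no information at all.) So the object you can ``peel off'' is not the bottom block $\sum_{i\le r}x_i\otimes y_i$ of your chosen decomposition but a regrouped one, exactly as in part~b) of Lemma~\ref{finite rank aux2}, where the splitting $TQ+TQ^{\perp}$ and a recursion on the rank of $TQ$ are unavoidable. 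As written, your induction step does not close in this case.

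The paper avoids all of this with one observation you missed: the set $\mathfrak{B}=\{S\in\AAA:{P^{\prime}}^{\perp}SP=0\ \text{for all}\ P\in\NNN\}$ is a norm closed two-sided associative ideal of $\AAA$ (using ${P^{\prime}}^{\perp}A={P^{\prime}}^{\perp}A{P^{\prime}}^{\perp}$ and $AP=PAP$ for $A\in\AAA$), hence in particular a norm closed Lie ideal. A finite rank $T$ satisfying your hypothesis lies in $\mathfrak{B}$, so Theorem~\ref{finite rank} applied to $\mathfrak{B}$ (not to $\LLL$) writes $T$ as a finite sum of rank one operators each lying in $\mathfrak{B}$, i.e.\ each satisfying the condition; Lemma~\ref{homomorphism rank 1} then places each of them, and hence $T$, in $\LLL$. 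This delegates precisely the linear-independence and Grammian bookkeeping you are trying to reproduce to the already-proved decomposition theorem. If you want to keep your hands-on induction, you must carry out the $TQ/TQ^{\perp}$ regrouping in the dependent range rather than assert membership of the original rank one summands; otherwise, route the converse through $\mathfrak{B}$.
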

 
 \begin{proof} The assertion trivially holds for $T=0$. It will be assumed from now on that $T$ is an operator of  rank $n\geq 1$ in the nest algebra $\AAA$.

 If the operator $T$ lies in $\LLL$, then by Theorem \ref{finite rank},  it can be written as  a finite sum of rank one operators  in the ideal. By Lemma \ref{homomorphism rank 1}, each of this rank one operators satisfies the condition associated to the homomorphism $P\mapsto P^{\prime}$ and, therefore, also $T$  satisfies the condition.
 
 Conversely, suppose that $T$ is an  operator  in the nest algebra with rank $n\geq 1$ and  such that, for all $P$ in $\NNN$, 
 $$
 {P^{\prime}}^{\perp}TP=0.
 $$
As a consequence, the  operator $T$ lies in the norm closed (associative) ideal 
$$
\mathfrak{B}=\{S\in \AAA: {P^{\prime}}^{\perp}SP=0\}
$$
of the nest algebra $\AAA$ and thus, by Theorem \ref{finite rank}, there exist finitely many  operators $x_i\otimes y_i$ in $\mathfrak{B}$ such that 
$$
T=\sum_{i}x_i\otimes y_i.
$$
Therefore, by Lemma \ref{homomorphism rank 1}, for all $i$, the operator 
$x_i\otimes y_i$ lies in $\LLL$ and thus also $T$ is an operator in this ideal.
\end{proof} 

\subsection*{Acknowledgments} The author wishes to thank I. G. Todorov for his  helpful remarks during  the preparation of the manuscript. The financial support of CEAF, through a  FCT grant under the Research Units Pluriannual Funding Programme, is also gratefully acknowledged.

\end{document}